\newtheorem{thm}{Theorem}[section]
\newtheorem{cor}[thm]{Corollary}
\theoremstyle{definition}
\newtheorem{defn}[thm]{Definition}
\newtheorem{exmp}[thm]{Example}
\theoremstyle{remark}
\let\c@equation\c@thm
\numberwithin{equation}{section}
\title{On sequences in $2$-normed spaces}
\author{\; \; \; Sibel Ersan and Huseyin Cakalli\\ Maltepe University, \.{I}stanbul\\ Turkey}
\address{Sibel Ersan\\
            Faculty of Engineering and Natural Sciences, Maltepe University, Marmara E\u{g}\.{I}t\.{I}m K\"oy\"u, TR 34857, Maltepe, \.{I}stanbul-Turkey\\ Phone: (+90216) 6261050 ext:2396, fax: (+90216) 6261131}
\email{sibelersan@maltepe.edu.tr; sibelersan@gmail.com}
\address{H\"usey\.{I}n \c{C}akall\i \\
          Maltepe University, Department of Mathematics, Marmara E\u{g}\.{I}t\.{I}m K\"oy\"u, TR 34857, Maltepe, \.{I}stanbul-Turkey, Phone:(+90216)6261050 ext:2248, fax: (+90216) 6261113}
\email{hcakalli@maltepe.edu.tr; hcakalli@gmail.com}
\date{\today}
\begin{document}

\begin{abstract}
A function $f$ defined on a $2$-normed space $ (X,||.,.||)$  is ward continuous if it preserves quasi-Cauchy sequences where a sequence $(x_n)$  of points in $X$ is called quasi-Cauchy if  $lim_{n\rightarrow\infty}||\Delta x_{n},z||=0$ for every $z\in X$. Some other kinds of continuties are also introduced via quasi-Cauchy sequences in $2$-normed spaces. It turns out that uniform limit of ward continuous functions is again ward continuous.

\end{abstract}

\maketitle

\section{Introduction}
The concept of continuity and any concept involving continuity play a very important role not only in pure mathematics but also in other branches of sciences involving mathematics especially in computer science, information theory, biological science.

About 90 years ago, Menger (\cite{menger}) introduced a notion called a generalized metric. But many mathematicians had not paid attentions to Menger
theory about generalized metrics. Very few mathematicians, for example, A.Wald, L. M. Blumenthal, W.A. Wilson, O. Haupt, C. Pauc, etc have
developed Menger's idea. First these researches had a very close connection with the direct method of variational calculus (for details, see \cite{blu,blu1} and \cite{pauc}). On the other hand, in 1938 Vulich (\cite{vulic}) introduced a notion of higher dimensional norm in linear spaces. Unfortunately, this study had been neglected by many analysists for along time. A Froda's work (\cite{froda}) appeared in 1958. Then a new development began with 1962 by G\"ahler (\cite{gohler}, \cite{g2}, and \cite{g3}).

The notions of a two norm, and a two metric have been extensively studied in (\cite{iseki, gunawan, ChuandParkandParkTheAleksandrovprobleminlinear2normedspaces, ist}) in which a lot for the extension of this branch of mathematics is contributed. Recently many mathematicians came out with results in $2$-normed spaces, analogous with that in classical normed spaces and Banach spaces (see for example \cite{MohiuddineSomenewresultsonapproximationinfuzzy2-normedspaces}, \cite{raji}, and \cite{DasandSavasandBhuniaSantanuTwovaluedmeasureandsomenewdoublesequencespacesin2-normedspaces}).

The concepts of ward continuity of a real function and ward compactness of a subset $E$ of $\textbf{R}$ are introduced by Cakalli in \cite{cakalli} (see also \cite{DikandCanak}, and \cite{burton}). A real function $f$ is called ward continuous on $E$ if the sequence $(f(x_n))$ is quasi-Cauchy whenever $x=(x_n)$ is a quasi-Cauchy sequence of points in $E$.

The aim of this paper is to investigate quasi-Cauchy sequences in $2$-normed spaces, and prove interesting theorems.

\section{Preliminaries}
Now we give some notation and definitions which will be needed in the paper. Throughout this paper, $\textbf{N}$ and $\textbf{R}$ will denote the set of positive integers and the set of real numbers, respectively. First we recall the definition of a $2$-normed space.

\begin{defn} \label{Definitionofatwonormedspace} (\cite{gohler})
Let $X$ be a real linear space with $\dim X >1$ and \; \; \; \; \; $||.,.||:X^{2}\rightarrow \textbf{R}$ a function. Then $(X,||.,.||)$ is called a linear $2$-normed space if

\begin{enumerate}
	\item $||x,y||=0\Leftrightarrow$ x and y are linearly dependent,
	\item $||x,y||=||y,x||$,
	\item $||\alpha x,y||=\left|\alpha\right|||x,y||$,
	\item $||x,y+z||\leq||x,y||+||x,z||$
\end{enumerate}
for $\alpha \in \textbf{R}$ and $x,y,z\in X$. The function $||.,.||$ is called the $2$-norm on $X$.

Observe that in any $2$-normed space $ (X,||.,.||)$ we have $ ||.,.||$ is nonnegative, $||x-z,x-y||=||x-z,y-z||$, and $\forall x,y\in X, \alpha\in \mathbf{R}$ $||x,y+\alpha x||=||x,y||$. Throughout this paper by $X$ we will mean a $2$-normed space with a two norm $||.,.||$.
\end{defn}
A classical example is the $2$-normed space $X=\mathbf{R}^{2}$ with the two norm $||.,.||$ defined  by $||a,b||=\left|a_{1}b_{2}-a_{2}b_{1}\right|$ where $a=(a_{1},a_{2})$, $b=(b_{1},b_{2})\in \mathbf{R}^{2}$. This is
the area of the parallelogram determined by the vectors $a$ and $b$.

A sequence $(x_{n})$ of points in $X$ is said to be convergent to an element $x\in{X}$ if $lim_{n\rightarrow\infty}||x_{n}-x,z||=0$ for every $z\in X$. This is denoted by $lim_{n\rightarrow\infty} ||x_{n}, z|| = ||x, z||$. A sequence $(x_n)$  of points in $X$ is called Cauchy if  $lim_{n,m\rightarrow\infty}||x_{n}-x_{m}, z||=0$ for every $z\in X$ (see \cite{gohler}).

A sequence of functions $(f_n)$ is said to be uniformly convergent to a function $f$ on a subset $E$ of $X$ if for each $\epsilon>0$, an integer $N$ can be found such that $||f_n(x)-f(x),z||<\epsilon$ for $n\geq N$ and for all $x,z\in X$.
	
\section{Quasi-Cauchy sequences in $2$-normed spaces}

In this section we investigate the notions of ward continuity and ward compactness on a $2$-normed space. Now we give some definitions that are used to describe the subject.
\begin{defn}  A sequence $(x_n)$  of points in a $2$-normed space $ (X,||.,.||)$ is called quasi-Cauchy if  $lim_{n\rightarrow\infty}||\Delta x_{n},z||=0$ for every $z\in X$ where $\Delta x_{n}=x_{n+1}-x_{n}$ for every $n\in{\textbf{N}}$.
\end{defn}

We note that any convergent sequence is quasi-Cauchy, and any Cauchy sequence is quasi-Cauchy. Any subsequence of a Cauchy sequence is Cauchy.
The analogous property fails for quasi-Cauchy sequences. A counter example is provided in the following.

\begin{exmp} The subsequence $(a_{n^{2}})=(n,n)$ of the quasi-Cauchy sequence $(a_n)=(\sqrt{n},\sqrt{n})$ of points in the $2$-normed space $R^2$ is not quasi-Cauchy.
\end{exmp}

\begin{defn}  A subset $E$ of $X$ is called ward compact if any sequence of points in $E$ has a quasi-Cauchy subsequence.
\end{defn}

First, we note that any finite subset of $X$ is ward compact, union of two ward compact subsets of $X$ is ward compact and intersection of ward compact subsets of $X$ is ward compact. Furthermore any subset of a ward compact set is ward compact.

For any given two norm we can define norms by using two norm values. Consider the norm $||.||$ defined on a linear $2$-normed space $(X,||.,.||)$ by the function $$||x||=||x,y||+||x,z||$$ for any fixed $y,z \in X$ and $||y,z||\neq 0$. The function  $||.||$ defined on $X$ is a norm on $X$ (\cite{Raymondwfreeseandyeoljecho}). On the other hand, $||x||=max_{y,z} \{||x,y||, ||x,z||\}$ is also a norm on $X$. We see that these norms can be defined for any linearly independent pair of $y, z$ of points in $X$.  It is obvious that any convergent sequence on a $2$-normed space $(X,||.,.||)$ is also convergent on the normed space $(X, ||.||)$, and any quasi-Cauchy sequence in a $2$-normed space $(X,||.,.||)$ is quasi-Cauchy on the normed space $(X, ||.||)$. If a subset $E$ of a $2$-normed space $(X,||.,.||)$ is ward compact, then it is also ward compact in the normed space $(X, ||.||)$. Moreever any ward compact subset of a $2$-normed space $(X, ||.,.||)$ is totally bounded in the normed space $(X, ||.||)$ (\cite[Theorem 3]{CakalliStatisticalquasiCauchysequences}).

Let $(X,||.,.||)$ be a linear $2$-normed space. For $x,z\in X$, let $p_z(x)=||x,z||$. Then, for each $z\in X$, $p_z$ is a real-valued  function on $X$ such that $p_z(x)=||x,z||\geq 0$, $p_z(\alpha x)=|\alpha|||x,z||=|\alpha|p_z(x)$ and $p_z(x+y)=||x+y,z||=||z,x+y||\leq ||z,x||+||z,y||=||x,z||+||y,z||=p_z(x)+p_z(y)$ for all $\alpha \in \textbf{R}$ and all $x,y\in X$. Thus $p_z$ is a semi-norm for each $z\in X$. For $x\in X$, if $||x,z||=0$ for all $z\in X$ then $x=0$ (\cite[Lemma 1.2]{park}). Thus $0\neq x\in X$ implies that there is some $z\in X$ satisfying $p_z(x)=||x,z||\neq 0$. With this additional condition the family $\{p_z:z\in X\}$ is  to be a separating family of semi-norms.

For $\epsilon>0$ and $z\in X$, let $U_{z,\epsilon}=\{x\in X:p_z(x)<\epsilon\}=\{x\in X: ||x,z||<\epsilon\}$. Let $  \mathcal{S}_0=\{U_{z,\epsilon}:\epsilon>0, z\in X\}$ and $\mathcal{B}_0=\{\bigcap \mathcal{F}: \mathcal{F}$ is a finite sub-collection of $ S_0\}$. Define a topology $\tau$ on $X$ by saying that a set $U$ is open if and only if for every $x\in U$ there is some $N\in \mathcal{B}_0$ such that $x+N=\{x+y:y\in N\}\subset U$. That is, $\tau$ is the topology on $X$ that has as a subbase the sets $$\{x\in X: p_z(x-x_0)<\epsilon\},\ z\in X, \ x_0\in X, \ \epsilon>0.$$ The topology $\tau$ gives $X$ the structure of topological vector space. Since the collection $\mathcal{B}_0$ is a local base whose members are convex, $X$ is locally convex (\cite{park}).

Now we introduce a definition of ward continuity in a $2$-normed space.
\begin{defn} A function $f:X\rightarrow X$ is called ward continuous if it preserves quasi-Cauchy sequences, i.e.
$lim_{n\rightarrow\infty}||\Delta f (x_{n}), w||=0$ for every $w \in {X}$
whenever
$lim_{n\rightarrow\infty}||\Delta x_n,z||=0$
for every $z \in X$.
\end{defn}
In connection with quasi-Cauchy sequences and convergent sequences the problem arises to investigate the following types of continuity of functions on
$X$.
\begin{enumerate}
	\item $(x_n)$ is quasi-Cauchy $\Rightarrow(f(x_n))$ is quasi-Cauchy.
	\item $(x_n)$ is quasi-Cauchy $\Rightarrow(f(x_n))$ is convergent.
	\item $(x_n)$ is convergent $\Rightarrow(f(x_n))$ is convergent.
	\item $(x_n)$ is convergent $\Rightarrow(f(x_n))$ is quasi-Cauchy.
\end{enumerate}

It is obvious that $(2)\Rightarrow (1)$ while $(1)$ does not imply $(2)$, $(1)\Rightarrow (4)$ while $(4)$ does not imply $(1)$, $(2)\Rightarrow (3)$ while $(3)$ does not imply $(2)$ and lastly $(3)$ is equivalent to $(4)$. We see that (1) is ward continuity of $f$ and (3) is ordinary continuity of $f$. We give the definition of sequential continuity in $2$-normed spaces in the following before proving that (1) implies (3).

\begin{defn}
A function $f$ on a subset $E$ of a $2$-normed space $(X,||.,.||)$ is said to be sequentially continuous at $x_0$ if for any sequence $(x_{n})$  of points in $E$ converging to $x_{0}$, we have $(f(x_{n}))$ converges to $f(x_{0})$ (see \cite{lael} for the definition for the special case when $f$ is linear).
\end{defn}

\begin{thm}\label{Theoremwardcontinuousimpliessequentiallycontinuous}
If $f:X\rightarrow X$ is ward continuous on a subset $E$ of $X$, then it is sequentially continuous on $E$.
\end{thm}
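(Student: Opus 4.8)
The plan is to prove the contrapositive in the standard way: assume $f$ is ward continuous on $E$ but fails to be sequentially continuous at some point $x_0 \in E$, and derive a contradiction by building a quasi-Cauchy sequence whose image is not quasi-Cauchy. If $f$ is not sequentially continuous at $x_0$, then there is a sequence $(x_n)$ in $E$ converging to $x_0$ such that $(f(x_n))$ does not converge to $f(x_0)$; concretely, there exist $w \in X$ and $\varepsilon > 0$ such that $\|f(x_n) - f(x_0), w\| \geq \varepsilon$ for infinitely many $n$. Passing to a subsequence, I may assume $\|f(x_n) - f(x_0), w\| \geq \varepsilon$ for every $n$, while still $\lim_{n\to\infty}\|x_n - x_0, z\| = 0$ for every $z \in X$ (subsequences of convergent sequences converge to the same limit in this setting, since convergence is controlled coordinate-wise through each seminorm $p_z$).

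Next I would splice $x_0$ and the $x_n$ together into a single sequence that is quasi-Cauchy. Consider
$$
(y_k) = (x_0, x_1, x_0, x_2, x_0, x_3, \ldots),
$$
that is, $y_{2j-1} = x_0$ and $y_{2j} = x_j$ for $j \in \textbf{N}$. For any fixed $z \in X$, the consecutive differences $\Delta y_k$ are, up to sign, either $x_j - x_0$ or $x_0 - x_j$ for increasing $j$, so by property (2) of the $2$-norm, $\|\Delta y_k, z\| = \|x_j - x_0, z\|$ for the appropriate $j \to \infty$, hence $\lim_{k\to\infty}\|\Delta y_k, z\| = 0$. Thus $(y_k)$ is quasi-Cauchy. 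Since $f$ is ward continuous, $(f(y_k))$ must be quasi-Cauchy, i.e. $\lim_{k\to\infty}\|\Delta f(y_k), w\| = 0$ for the $w$ fixed above. But the differences $\Delta f(y_k)$ run through $f(x_j) - f(x_0)$ and $f(x_0) - f(x_j)$, each of which has $w$-norm at least $\varepsilon$ by construction; so $\|\Delta f(y_k), w\| \geq \varepsilon$ for all $k$, contradicting $\lim_{k\to\infty}\|\Delta f(y_k), w\| = 0$. This contradiction shows $f$ is sequentially continuous at $x_0$, and since $x_0 \in E$ was arbitrary, $f$ is sequentially continuous on $E$.

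The only mildly delicate points, none of which is a real obstacle, are: justifying that a subsequence of a sequence converging to $x_0$ still converges to $x_0$ in a $2$-normed space (immediate from the seminorm description, since for each $z$ the scalar sequence $\|x_n - x_0, z\|$ tends to $0$ and so does any of its subsequences); and being careful that the negation of "$(f(x_n)) \to f(x_0)$" really yields a single $w$ and a single $\varepsilon$ that work for infinitely many indices, rather than merely for each $z$ separately — this is handled correctly because the definition of convergence in $X$ is "for every $z$", so its negation is "there exists $z$ (here called $w$)". Everything else is a routine application of the triangle inequality (axiom (4)) and the symmetry and homogeneity axioms of the $2$-norm. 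I would present the argument exactly in the interleaving-sequence form above, as it is the cleanest route and mirrors the classical proof that Cauchy-continuity implies continuity.
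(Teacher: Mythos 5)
Your proof is correct and rests on exactly the same key construction as the paper's: the interleaved sequence $(x_0, x_1, x_0, x_2, \ldots)$, which converges to $x_0$ (hence is quasi-Cauchy) and whose image under $f$ has consecutive differences $\pm(f(x_j)-f(x_0))$. The only differences are cosmetic --- the paper argues directly rather than by contradiction, so it needs no subsequence extraction --- plus one trivial slip on your part: the identity $\|x_0 - x_j, z\| = \|x_j - x_0, z\|$ follows from homogeneity (axiom (3) with $\alpha = -1$), not from the symmetry axiom (2).
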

\begin{proof}
Let $(x_n)$ be any convergent sequence of points in $E$ with
$$lim_{n\rightarrow\infty}||x_n-x_0,y||=0,$$ for all $y\in X$.
Then the sequence $\boldsymbol{\xi}=(\xi_{n})$ defined by
$$
 \xi_{n} =
 \begin{cases}
 x_{k} \;, & \text{if }n=2k-1\text{ for a positive integer}\; k \\
 x_{0} \;, & \text{if }n\text{ is even}
 \end{cases}
 $$
is also convergent to $x_{0}$. Therefore it is a quasi-Cauchy sequence. As $f$ is ward continuous on $E$, the transformed sequence $f(\boldsymbol{\xi})=(f(\xi_{n}))$ obtained by
$$
 f(\xi_{n}) =
 \begin{cases}
 f(x_{k}) \;, & \text{if }n=2k-1\text{ for a positive integer}\; k \\
 f(x_{0}) \;, & \text{if }n\text{ is even}
 \end{cases}
 $$
is also quasi-Cauchy. Now it follows that $\lim_{n\rightarrow\infty}||f(x_n)-f(x_0), z||=0$ for every $z\in{X}$. It implies that the sequence $(f(x_n))$ converges to $f(x_0)$. This completes the proof of the theorem.
\end{proof}
The converse of this theorem is not always true. We give the following example.
\begin{exmp}
Let $f(x,y)=(x^{2},y^{2})$ be the function from $\mathbf{R}\times \mathbf{R}$ into $\mathbf{R}\times \mathbf{R}$. We define a two norm by $||(a_1,a_2),(b_1,b_2)||=\left|a_{1}b_{2}-a_{2}b_{1}\right|$. $\forall z=(z_1,z_2)\in \mathbf{R}\times \mathbf{R}$
\begin{align*}
&lim_{n\rightarrow\infty}||\Delta x_n,z||
=lim_{n\rightarrow\infty}|| x_{n+1}-x_n,z||\\
&=lim_{n\rightarrow\infty}|| (\sqrt{n+1},\sqrt{n+1})-(\sqrt{n},\sqrt{n}),(z_1,z_2)||\\
&=lim_{n\rightarrow\infty}|| (\sqrt{n+1}-\sqrt{n},\sqrt{n+1}-\sqrt{n}),(z_1,z_2)||\\
&=0
\end{align*}
Thus the sequence $x_n=(\sqrt{n},\sqrt{n})$ is quasi-Cauchy. On the other hand
$(f(x_n))=(f(\sqrt{n},\sqrt{n}))=(n,n)$ is not quasi-Cauchy. Since  we have for $w=(1,2)$
\begin{align*}
&lim_{n\rightarrow\infty}||\Delta f( x_n), w||=lim_{n\rightarrow\infty}||f(x_{n+1})-f(x_n), w||\\
&=lim_{n\rightarrow\infty}||(n+1,n+1)-(n,n), w||\\
&=lim_{n\rightarrow\infty}||(1,1),(1,2)||=1\neq 0
\end{align*}
\end{exmp}
Observing that any normed space (single normed) is a first countable topological Hausdorff group, we have the following result related to $G$-sequential continuity (see \cite{CakalliOnGcontinuity}, \cite{CakalliSequentialdefinitionsofcompactness} for the definition of $G$-sequential continuity.).
\begin{cor}
Let $X$ be a finite dimensional $2$-normed space with dimension $m$, and $\{e_{1}, e_{2}, ..., e_{m}\}$ be a base of $X$. Consider the norm defined by $||x||_{\infty}=max_{i=1,2,...,m}\;||x, e_{i}||$.
If $f:X\rightarrow X$ is ward continuous on a subset $E$ of the two normed space $X$, then it is $G$-sequentially continuous on $E$ in the normed space $X$ for every regular subsequential method $G$.
\end{cor}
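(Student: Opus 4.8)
The plan is to reduce the statement to a known fact about $G$-sequential continuity on first countable Hausdorff topological groups. The first reduction is Theorem~\ref{Theoremwardcontinuousimpliessequentiallycontinuous}; the remaining work is to pass from the $2$-normed space $(X,||\cdot,\cdot||)$ to the normed space $(X,||\cdot||_{\infty})$, checking that the notions involved do not change. First I would verify that $||x||_{\infty}=\max_{1\le i\le m}||x,e_{i}||$ really is a norm: non-negativity, absolute homogeneity and the triangle inequality are immediate from axioms (2)--(4) of Definition~\ref{Definitionofatwonormedspace}, since each $p_{e_{i}}(x)=||x,e_{i}||$ is a semi-norm and a finite maximum of semi-norms is a semi-norm; for positive definiteness, if $||x,e_{i}||=0$ for all $i$ then for every $z=\sum_{j}c_{j}e_{j}\in X$ we get $||x,z||\le\sum_{j}|c_{j}|\,||x,e_{j}||=0$, so $z$ and $x$ are linearly dependent for all $z\in X$, which forces $x=0$ because $\dim X>1$ (equivalently, by the separating property of the family $\{p_{z}:z\in X\}$, i.e.\ \cite[Lemma~1.2]{park}). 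The same estimate $||y,z||\le\sum_{j}|c_{j}|\,||y,e_{j}||$ applied with $y=x_{n}-x$ (resp.\ $y=\Delta x_{n}$), together with the trivial reverse bound on the generators $e_{i}$, shows that a sequence converges (resp.\ is quasi-Cauchy) in the $2$-norm exactly when it does so in $||\cdot||_{\infty}$; hence ``ward continuous on $E$'' and ``sequentially continuous on $E$'' have the same meaning in $(X,||\cdot,\cdot||)$ and in $(X,||\cdot||_{\infty})$.

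Granting this dictionary the argument is short. By Theorem~\ref{Theoremwardcontinuousimpliessequentiallycontinuous}, $f$ is sequentially continuous on $E$; by the previous paragraph this is sequential continuity in $(X,||\cdot||_{\infty})$, and since this normed space is metrizable, hence first countable, sequential continuity coincides with ordinary continuity there. Because $||\cdot||_{\infty}$ is translation invariant, $(X,+)$ with its topology is a first countable Hausdorff topological group --- the observation recorded just before the statement. It then remains to invoke the cited theory of $G$-sequential continuity (\cite{CakalliOnGcontinuity}, \cite{CakalliSequentialdefinitionsofcompactness}): on a first countable Hausdorff topological group every continuous self-map is $G$-sequentially continuous for every regular subsequential method $G$. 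Applying this to $f$, and tracking the restriction to $E$ through each step, yields the corollary.

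I expect the genuine content to lie entirely in that last invocation, so the care needed here is of two kinds. First, the hypothesis that $\{e_{1},\dots,e_{m}\}$ is a \emph{base}, and not merely a linearly independent family, is exactly what makes $||\cdot||_{\infty}$ a norm rather than a semi-norm and makes $(X,||\cdot||_{\infty})$ Hausdorff; without it the passage to a single normed space fails. Second, one must use both halves of ``regular subsequential'' in the right place: regularity guarantees that $G$ extends the ordinary limit on convergent sequences, while the subsequential hypothesis is what is really responsible for ``continuous $\Rightarrow$ $G$-sequentially continuous'' --- it cannot be dropped, since, for instance, Cesàro convergence is a regular linear method that is not subsequential, and $x\mapsto x^{2}$ is continuous but not Cesàro continuous (the sequence $((-1)^{n})$ is Cesàro convergent to $0$ while its image, the constant sequence $1$, is not). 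Everything else --- that each implication in the chain ``ward continuous $\Rightarrow$ sequentially continuous $\Rightarrow$ continuous $\Rightarrow$ $G$-sequentially continuous'' is compatible with the restriction to the subset $E$ --- is routine and would just need to be stated carefully.
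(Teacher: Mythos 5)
Your proposal is correct and follows exactly the route the paper intends: the paper offers no written proof of this corollary, only the preceding remark that a normed space is a first countable Hausdorff topological group together with the citations on $G$-sequential continuity, and your argument (Theorem \ref{Theoremwardcontinuousimpliessequentiallycontinuous}, the equivalence of the $2$-norm and $||\cdot||_{\infty}$ notions of convergence via expansion in the base, and the fact that on a first countable Hausdorff group continuity implies $G$-sequential continuity for regular subsequential $G$) is precisely the missing chain. You have simply supplied the details the authors left implicit.
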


\begin{cor}
Let $X$ be a finite dimensional $2$-normed space with dimension $m$, and $\{e_{1}, e_{2}, ..., e_{m}\}$ be a base of $X$. Consider the norm defined by $||x||_{\infty}=max_{i=1,2,...,m}\;||x, e_{i}||$.
If $f:X\rightarrow X$ is ward continuous on a subset $E$ of the two normed space $X$, then it is sequentially continuous on $E$ in the normed space $(X, ||.||_{\infty})$.
\end{cor}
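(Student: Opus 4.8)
The plan is to reduce the statement to Theorem~\ref{Theoremwardcontinuousimpliessequentiallycontinuous} by showing that, for a finite dimensional $2$-normed space, convergence of a sequence in the normed space $(X,||.||_\infty)$ coincides with convergence in the $2$-normed space $(X,||.,.||)$. Granting this, the argument is immediate: if $(x_n)$ is a sequence of points in $E$ with $||x_n-x_0||_\infty\to 0$, then $(x_n)$ converges to $x_0$ in the $2$-normed space, so by Theorem~\ref{Theoremwardcontinuousimpliessequentiallycontinuous} the sequence $(f(x_n))$ converges to $f(x_0)$ in $(X,||.,.||)$, and then once more by the equivalence of the two convergences $(f(x_n))$ converges to $f(x_0)$ in $(X,||.||_\infty)$; that is, $f$ is sequentially continuous on $E$ in the normed space.

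First I would record that $||.||_\infty$ is genuinely a norm on $X$. Nonnegativity, absolute homogeneity and the triangle inequality are inherited from the properties of the $2$-norm and the fact that a finite maximum of seminorms is a seminorm; for definiteness, if $||x||_\infty=0$ then $||x,e_i||=0$ for each $i$, so $x$ is linearly dependent with each $e_i$, and since $m=\dim X>1$ there are two linearly independent vectors among the $e_i$, forcing $x=0$. Alternatively one may simply quote the earlier remark that such expressions built from $2$-norm values are norms.

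The key step is the equivalence of convergences. If $||x_n-x_0||_\infty\to 0$, then in particular $||x_n-x_0,e_i||\to 0$ for every $i$. Given an arbitrary $z\in X$, write $z=\sum_{i=1}^m\alpha_i e_i$; then by properties (3) and (4) of the $2$-norm,
$$||x_n-x_0,z||=\Big|\Big|x_n-x_0,\sum_{i=1}^m\alpha_i e_i\Big|\Big|\le\sum_{i=1}^m|\alpha_i|\,||x_n-x_0,e_i||\longrightarrow 0,$$
so $(x_n)$ converges to $x_0$ in $(X,||.,.||)$. The reverse implication is trivial, since convergence in the $2$-normed space means $||x_n-x_0,z||\to 0$ for every $z\in X$, in particular for $z=e_1,\dots,e_m$, whence $||x_n-x_0||_\infty=\max_{i}||x_n-x_0,e_i||\to 0$. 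Applying the same computation to $(f(x_n))$ and $f(x_0)$ supplies the other half of the equivalence used above.

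I expect the only point requiring care to be this passage from the finitely many directions $e_1,\dots,e_m$ to an arbitrary direction $z$ in the definition of $2$-normed convergence; everything else is bookkeeping, and finite dimensionality enters precisely here, allowing $z$ to be expanded in the basis with finitely many coefficients so that the triangle inequality can be applied termwise.
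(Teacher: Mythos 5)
Your argument is correct, and since the paper states this corollary without any proof, what you have written is exactly the derivation the authors intend: reduce to Theorem \ref{Theoremwardcontinuousimpliessequentiallycontinuous} by showing that convergence in $(X,||.||_{\infty})$ and convergence in the $2$-normed sense coincide, the nontrivial direction being the expansion $z=\sum_{i=1}^{m}\alpha_{i}e_{i}$ together with properties (3) and (4) of the $2$-norm, which is precisely where finite dimensionality enters. No gaps; the verification that $||.||_{\infty}$ is a genuine norm (using $\dim X>1$ and linear independence of the $e_{i}$) is a worthwhile detail the paper also leaves unstated.
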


The following theorem guarantees that the ward continuous image of a ward compact subset of $X$ is ward compact.

\begin{thm} Let $E$ be a ward compact subset of $X$. If  $f:X\rightarrow X$ is ward continuous on $E$, then $f(E)$ is ward compact.
\end{thm}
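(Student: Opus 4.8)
The plan is to mimic the classical argument that a continuous image of a compact set is compact, with ``convergent subsequence'' replaced by ``quasi-Cauchy subsequence'' throughout. To prove $f(E)$ is ward compact, I must take an arbitrary sequence of points in $f(E)$ and extract a quasi-Cauchy subsequence.

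First I would let $(y_n)$ be any sequence of points in $f(E)$. By the definition of $f(E)$, for each $n\in\textbf{N}$ there is a point $x_n\in E$ with $y_n=f(x_n)$; fixing such a choice gives a sequence $(x_n)$ of points in $E$. Since $E$ is ward compact, $(x_n)$ has a subsequence $(x_{n_k})$ that is quasi-Cauchy, i.e. $lim_{k\rightarrow\infty}||\Delta x_{n_k},z||=0$ for every $z\in X$. Next I would invoke ward continuity of $f$ on $E$: the sequence $(x_{n_k})$ is quasi-Cauchy and all its terms lie in $E$, so the transformed sequence $(f(x_{n_k}))$ is quasi-Cauchy, that is, $lim_{k\rightarrow\infty}||\Delta f(x_{n_k}),w||=0$ for every $w\in X$. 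But $f(x_{n_k})=y_{n_k}$, so $(y_{n_k})$ is a quasi-Cauchy subsequence of $(y_n)$. As $(y_n)$ was arbitrary, every sequence in $f(E)$ has a quasi-Cauchy subsequence, which is exactly the statement that $f(E)$ is ward compact.

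There is essentially no serious obstacle here; the only points that need a moment's attention are to confirm that the extracted subsequence $(x_{n_k})$ still consists of points of $E$, so that the hypothesis ``ward continuous on $E$'' genuinely applies, and that selecting a subsequence of indices commutes with applying $f$ (so that $(f(x_{n_k}))$ really is the subsequence $(y_{n_k})$ of $(f(x_n))$) --- both of which are immediate from the definitions. One could additionally remark, in parallel with the classical statement, that the hypothesis of ward compactness of $E$ cannot be dropped without imposing further conditions on $f$.
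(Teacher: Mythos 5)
Your proof is correct and follows essentially the same route as the paper's: extract a quasi-Cauchy subsequence in $E$ by ward compactness, then push it forward by ward continuity. In fact your version is the more carefully written of the two, since you begin with an arbitrary sequence in $f(E)$ and lift it to $E$ (a step the paper's proof leaves implicit), and you correctly state that $\lim_{k\rightarrow\infty}||\Delta f(x_{n_k}),w||=0$ for \emph{every} $w\in X$ rather than only for vectors of the form $f(y)$.
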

\begin{proof}
Ward compactness of $E$ implies that there is a subsequence $z=(z_k)$ of $\bold x=(x_n)$ with $lim_{k\rightarrow\infty}||\Delta z_k,y||=0, \forall y\in X.$ Let $(t_k)=(f(z_k))$. $(t_k)$ is the subsequence of the sequence $f(x)$ with $lim_{k\rightarrow\infty}||\Delta t_k,f(y)||=0.$ This completes the proof of the theorem.
\end{proof}
\begin{defn}
A function $f:X\rightarrow X$ is called uniformly continuous on a subset $E$ of $X$ if for any $\varepsilon>0$ and for any $w\in {X}$ there exist positive real numbers $\delta_1, \delta_2,..., \delta_p$ and $z_1, z_2, ..., z_p\in {X}$ such that $||f(x)-f(y),w||<\epsilon$ whenever $||x-y,z_k||<\delta_{k}$, \; $k=1, 2, ..., p$.
\end{defn}

We note that a function $f:X\rightarrow X$ is called uniformly continuous on a subset $E$ of $X$ if for any finite choose of $\epsilon_1, \epsilon_2,..., \epsilon_m$ and $w_1,w_2,...,w_m$ there exist $\delta_1, \delta_2,..., \delta_p$ and $z_1, z_2, ..., z_p$ such that $||f(x)-f(y),w_j||<\epsilon_j$ for $j=1, 2, ..., m$ whenever $||x-y,z_k||<\delta_{k}$, \; $k=1, 2, ..., p$. On the other hand, taking one $\epsilon$ and one $\delta$ in the consideration

we give the following definition.
\begin{defn}
A function $f:X\rightarrow X$ is called $u$-continuous on a subset $E$ of $X$ if for any given $\epsilon>0$ there exists a $\delta>0$ such that $||f(x)-f(y), w||<\epsilon$ for any $w\in {X}$ whenever $||x-y,z||<\delta$ for any $x,y\in{E}$, and $z\in X$.
\end{defn}
\begin{thm} \label{Theoremucontinuousfunctioniswardcontinuous}
If a function $f:X\rightarrow X$ is $u$-continuous on a subset $E$ of $X$, then it is ward continuous on $E$.
\end{thm}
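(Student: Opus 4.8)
\emph{Proof proposal.} The plan is to argue directly from the definitions: take an arbitrary quasi-Cauchy sequence $\mathbf{x}=(x_n)$ of points in $E$ and show that the transformed sequence $(f(x_n))$ is again quasi-Cauchy, which is precisely ward continuity of $f$ on $E$. So I would fix an arbitrary $w\in X$ and an arbitrary $\varepsilon>0$, and aim to produce an index $N\in\mathbf{N}$ with $||\Delta f(x_n),w||<\varepsilon$ for all $n\geq N$.

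The three steps are as follows. First, invoke $u$-continuity of $f$ on $E$ at the level $\varepsilon$: this yields a single $\delta>0$, crucially independent of the direction $w$, together with a direction $z\in X$, such that $||f(x)-f(y),w'||<\varepsilon$ for every $w'\in X$ whenever $x,y\in E$ satisfy $||x-y,z||<\delta$. Second, use that $\mathbf{x}$ is quasi-Cauchy: since $\lim_{n\to\infty}||\Delta x_n,z||=0$ holds for every element of $X$, in particular for the $z$ furnished by the first step there is an $N\in\mathbf{N}$ with $||\Delta x_n,z||=||x_{n+1}-x_n,z||<\delta$ for all $n\geq N$. Third, combine: for each $n\geq N$ the points $x_{n+1},x_n$ both lie in $E$ and satisfy the hypothesis of the $u$-continuity estimate, whence $||\Delta f(x_n),w||=||f(x_{n+1})-f(x_n),w||<\varepsilon$. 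Since $w\in X$ and $\varepsilon>0$ were arbitrary, $(f(x_n))$ is quasi-Cauchy, and $f$ is ward continuous on $E$.

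The point that requires care — and the reason one appeals to $u$-continuity rather than merely to the notion of uniform continuity introduced just above it — is the interchange of the ``for every $z$'' quantifier in the definition of a quasi-Cauchy sequence with the ``for every $w$'' quantifier in the desired conclusion. This goes through precisely because $u$-continuity delivers a $\delta$ that is uniform over all target directions $w$: once $\delta$ is chosen, the convergence $||\Delta x_n,z||\to 0$ (valid for each fixed $z$ separately, hence also for the finitely many directions that would appear in the $\delta_1,\dots,\delta_p$ version of the definition) lets us push the tail of $(x_n)$ into the prescribed $\delta$-neighbourhood, after which the one estimate simultaneously controls $||\Delta f(x_n),w||$ for every $w$. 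No compactness or completeness of $E$ enters, so the argument is essentially the standard ``uniform continuity preserves Cauchy-type sequences'' bookkeeping, transcribed to the $2$-normed setting.
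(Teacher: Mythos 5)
Your proposal is correct and follows essentially the same direct argument as the paper: extract $\delta$ from $u$-continuity for the given $\epsilon$, use the quasi-Cauchy property of $(x_n)$ to force $||\Delta x_n,z||<\delta$ for all large $n$, and conclude $||\Delta f(x_n),w||<\epsilon$ for every $w$. If anything, your bookkeeping of the quantifiers over $z$ and $w$ is more careful than the paper's own proof, which ends with the slightly garbled estimate $||\Delta f(x_n),f(z)||<\epsilon$ where $||\Delta f(x_n),w||<\epsilon$ is what is wanted.
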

\begin{proof}
Let $f$ be $u$-continuous on $E$, and $(x_n)$ be any quasi-Cauchy sequence of points in $E$. Then for given any $\epsilon>0$ there exists a $\delta>0$ such that $||f(x)-f(y), w||<\epsilon$ for any $w\in {X}$  whenever $||x-y,z||<\delta$ for any $x,y,z\in X$. For the choice of $\delta>0$, there exists an $N=N(\delta)=N_1(\epsilon)\in N$ such that $||\Delta x_n,z||<\delta$ for every $n>N$.
So  $||\Delta f(x_n),f(z)||<\epsilon$
for every $n>N$. This completes the proof of the theorem.
\end{proof}
\begin{cor} Any $u$-continuous function is sequentially continuous.
\end{cor}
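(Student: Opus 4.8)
The plan is to obtain this at once by composing two implications already established. First I would apply Theorem~\ref{Theoremucontinuousfunctioniswardcontinuous}: if $f:X\rightarrow X$ is $u$-continuous on a subset $E$ of $X$, then $f$ is ward continuous on $E$. Next I would apply Theorem~\ref{Theoremwardcontinuousimpliessequentiallycontinuous}: a function that is ward continuous on $E$ is sequentially continuous on $E$. Chaining these two implications yields that every $u$-continuous function is sequentially continuous, which is the assertion.

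For readers who prefer a self-contained verification, I would mimic the proof of Theorem~\ref{Theoremwardcontinuousimpliessequentiallycontinuous}. Let $(x_n)$ be a sequence of points in $E$ with $lim_{n\rightarrow\infty}||x_n-x_0,z||=0$ for every $z\in X$. Given $\epsilon>0$, pick $\delta>0$ witnessing $u$-continuity, so that $||f(x)-f(y),w||<\epsilon$ for all $w\in X$ whenever $||x-y,z||<\delta$. For a fixed $z$, convergence of $(x_n)$ supplies an $N$ with $||x_n-x_0,z||<\delta$ for $n>N$, hence $||f(x_n)-f(x_0),w||<\epsilon$ for all $w\in X$ and all $n>N$. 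Since $\epsilon>0$ was arbitrary, $lim_{n\rightarrow\infty}||f(x_n)-f(x_0),w||=0$ for every $w\in X$, i.e. $(f(x_n))$ converges to $f(x_0)$, as required.

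I expect no genuine obstacle here; this is a routine corollary of the two theorems. The only point that calls for a little care is the quantifier structure in the definition of $u$-continuity, namely that a single $\delta$ must serve for all $z\in X$ simultaneously — but that feature is precisely what was already exploited in the proof of Theorem~\ref{Theoremucontinuousfunctioniswardcontinuous}, so no new ingredient is needed.
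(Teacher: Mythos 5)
Your first paragraph is exactly the paper's argument: the corollary is obtained by chaining Theorem~\ref{Theoremucontinuousfunctioniswardcontinuous} ($u$-continuous $\Rightarrow$ ward continuous) with Theorem~\ref{Theoremwardcontinuousimpliessequentiallycontinuous} (ward continuous $\Rightarrow$ sequentially continuous), which is precisely what the paper does when it says the proof "follows easily from Theorem~\ref{Theoremwardcontinuousimpliessequentiallycontinuous}." The additional direct verification is a harmless bonus and does not change the fact that your route coincides with the paper's.
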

\begin{proof} Proof follows easily from Theorem \ref{Theoremwardcontinuousimpliessequentiallycontinuous}, so is omitted.
\end{proof}

\begin{cor} $u$-continuous image of any ward compact subset of $X$ is ward compact.
\end{cor}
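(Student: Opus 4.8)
The plan is to deduce this corollary by composing two results already in hand, in exactly the style of the preceding corollary. First, Theorem~\ref{Theoremucontinuousfunctioniswardcontinuous} tells us that a $u$-continuous function on $E$ is ward continuous on $E$. Second, the theorem stating that the ward continuous image of a ward compact subset of $X$ is again ward compact applies directly to such an $f$ and $E$. Chaining these two facts gives that $f(E)$ is ward compact, so the argument is essentially one line and there is no genuine obstacle.

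Should a self-contained proof be preferred, I would argue directly. Let $(y_n)$ be an arbitrary sequence of points in $f(E)$ and pick $x_n \in E$ with $y_n = f(x_n)$. Ward compactness of $E$ furnishes a quasi-Cauchy subsequence $(x_{n_k})$ of $(x_n)$, i.e. $\lim_{k\rightarrow\infty} ||\Delta x_{n_k}, z|| = 0$ for every $z \in X$. Given $\epsilon > 0$, take $\delta > 0$ from the definition of $u$-continuity and then $K$ with $||\Delta x_{n_k}, z|| < \delta$ for $k > K$; $u$-continuity yields $||\Delta f(x_{n_k}), w|| < \epsilon$ for every $w \in X$ and every $k > K$. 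Thus $(y_{n_k}) = (f(x_{n_k}))$ is a quasi-Cauchy subsequence of $(y_n)$, and $f(E)$ is ward compact.

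The only delicate point is the same one implicit in the proof of Theorem~\ref{Theoremucontinuousfunctioniswardcontinuous}: matching the ``for every $z$'' in the quasi-Cauchy hypothesis with the single $z$ and single $\delta$ supplied by $u$-continuity. Since we may simply invoke that theorem together with the theorem on ward continuous images of ward compact sets, this wrinkle never has to be confronted in the proof of the corollary itself.
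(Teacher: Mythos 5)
Your proposal is correct and follows the route the paper intends: the corollary is an immediate consequence of Theorem~\ref{Theoremucontinuousfunctioniswardcontinuous} combined with the theorem that the ward continuous image of a ward compact set is ward compact, which is why the paper states it without proof. Your one-line composition is exactly that argument, and your remark that the direct version would only re-encounter the quantifier issue already absorbed into Theorem~\ref{Theoremucontinuousfunctioniswardcontinuous} is apt.
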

Any $u$-continuous function is uniformly-continuous. But the converse is not always true. It is not difficult to construct an example in two dimensional case.
\begin{thm} \label{Theoremuniformlycontinuousfunctioniswardcontinuous}
If a function $f:X\rightarrow X$ is uniformly continuous on a subset $E$ of $X$, then it is ward continuous on $E$.
\end{thm}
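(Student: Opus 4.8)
The plan is to mimic the proof of Theorem \ref{Theoremucontinuousfunctioniswardcontinuous}, the only new wrinkle being that uniform continuity now supplies \emph{finitely many} pairs $(\delta_k,z_k)$ rather than a single pair $(\delta,z)$. First I would fix an arbitrary $w\in X$ and an arbitrary $\epsilon>0$, and let $(x_n)$ be a quasi-Cauchy sequence of points in $E$; since the goal is to show $lim_{n\rightarrow\infty}||\Delta f(x_n),w||=0$, it suffices to produce an index beyond which $||\Delta f(x_n),w||<\epsilon$.

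Next I would invoke uniform continuity of $f$ on $E$ with this $\epsilon$ and this $w$: there are positive reals $\delta_1,\dots,\delta_p$ and points $z_1,\dots,z_p\in X$ such that $||f(x)-f(y),w||<\epsilon$ whenever $||x-y,z_k||<\delta_k$ for every $k=1,\dots,p$. Now use that $(x_n)$ is quasi-Cauchy: for each fixed $k$ we have $lim_{n\rightarrow\infty}||\Delta x_n,z_k||=0$, so there is $N_k$ with $||\Delta x_n,z_k||<\delta_k$ for all $n>N_k$. Because there are only finitely many indices $k$, we may set $N=\max\{N_1,\dots,N_p\}$, and then for every $n>N$ all $p$ inequalities $||\Delta x_n,z_k||<\delta_k$ hold simultaneously.

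Applying uniform continuity with $x=x_{n+1}$ and $y=x_n$ then gives $||\Delta f(x_n),w||=||f(x_{n+1})-f(x_n),w||<\epsilon$ for all $n>N$. Since $w$ and $\epsilon$ were arbitrary, $(f(x_n))$ is quasi-Cauchy, i.e., $f$ is ward continuous on $E$.

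I do not expect a genuine obstacle here; the single point requiring care is the passage from the several-seminorms form of the uniform continuity hypothesis to a single usable estimate, which is handled precisely by the finiteness of $\{z_1,\dots,z_p\}$, allowing the maximum $N=\max_k N_k$ to be formed. (Had one worked with a single $z$ and $\delta$, as in $u$-continuity, there would be nothing left to prove, but uniform continuity is a priori weaker, so tracking all $p$ conditions at once is the crux.)
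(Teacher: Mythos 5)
Your proposal is correct and follows essentially the same route as the paper's own proof: invoke the uniform continuity hypothesis for the fixed $\epsilon$ and $w$ to obtain the finite family $(\delta_k,z_k)$, use quasi-Cauchyness of $(x_n)$ separately for each $z_k$ to get thresholds $N_k$, and pass to $N=\max_k N_k$. The paper's argument is identical in substance (with $n_0=\max\{n_1,\dots,n_p\}$ in place of your $N$), so there is nothing further to add.
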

\begin{proof}
Although the proof can be obtained by Theorem \ref{Theoremucontinuousfunctioniswardcontinuous} we give a proof for completeness. Let $f$ be uniformly continuous on $E$, and $(x_n)$ be any quasi-Cauchy sequence of points in $E$. Choose any $\epsilon>0$ and any $w \in {X}$.  As $f$ is uniformly continuous on $E$ for this $\epsilon$, and $w$
there exist $\delta_1, \delta_2,..., \delta_p$ and $z_1, z_2, ..., z_p$ such that $||f(x)-f(y),w||<\epsilon$ whenever $||x-y,z_k||<\delta_{k}$, $k=1, 2, ..., p$. Since $(x_n)$ is a quasi-Cauchy sequence, for $\delta_{1}$ there exists a positive integer $n_{1}$ such that
$||x_{n+1}-x_{n},z_{1}||<\delta_{1}$. Similarly for $\delta_{2}$ there exists a positive integer $n_{2}$ such that
$||x_{n+1}-x_{n},z_{2}||<\delta_{2}$. Having found positive integers $n_{3}$, $n_{4}$, .., and so on we finally can find a positive integer $n_{p}$ such that
$||x_{n+1}-x_{n},z_{p}||<\delta_{p}$. Now write $n_{0}=max\{n_{1}, n_{2}, n_{3}, ..., n_{p}\}$. Then  $||f(x_{n+1})-f(x_{n}),w||<\epsilon$ for $n \geq n_{0}$. Hence $(f(x_{n}))$ is a quasi-Cauchy sequence, therefore $f$ preserves quasi-Cauchy sequences. This completes the proof of the theorem.
\end{proof}

It is a well known result that uniform limit of a sequence of continuous functions is continuous.  Some related results in $2$-normed spaces are obtained in \cite{SarabadanandTalebiStatisticalconvergenceandidealconvergenceofsequencesoffunctionsin$2$-normedspaces}. For the case $u$-continuity, we have the following.

\begin{thm}
If $(f_n)$ is a sequence of $u$-continuous functions defined on a subset $E$ of $X$, and $(f_n)$ is uniformly convergent to a function $f$, then $f$ is $u$-continuous on $E$.
\end{thm}
\begin{proof}
Since $(f_n)$ is uniformly convergent to $f$, for given any $\epsilon>0$ there exists a positive integer $N$ such that $||f_n(x)-f(x), w||<\frac{\epsilon}{3}$ whenever $n\geq N$ for all $x, w\in E$. Since $f_N$ is $u$-continuous on $E$, for $\frac{\varepsilon}{3}$, there exists a positive real number $\delta$ such that $||f_{N}(x)-f_{N}(y), w||<\frac{\epsilon}{3}$ for any $w\in {X}$ whenever $||x-y, z||<\delta$ for any $x, y, z \in {X}$. Then whenever $||x-y, z||<\delta$, we have
\begin{align*}
&||f(x)-f(y), w|| \leq||f(x)-f_N(x), w||+||f_N(x)-f_N(y), w||+||f_N(y)-f(y), w||\\
&\leq \frac{\epsilon}{3}+\frac{\epsilon}{3}+\frac{\epsilon}{3}=\epsilon
\end{align*}
for every $w \in {X}$. So $f$ is $u$-continuous on $E$, and the proof is completed.
\end{proof}

In the real case it was proved that uniform limit of a sequence of ward continuous functions is ward continuous (\cite{cakalli}). It is also true in $2$-normed spaces.

\begin{thm}
 If $(f_n)$ is a sequence of ward continuous functions defined on a subset $E$ of $X$, and $(f_n)$ is uniformly convergent to a function $f$, then $f$ is ward continuous on $E$.
 
\end{thm}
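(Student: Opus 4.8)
The plan is to adapt the classical $\varepsilon/3$ argument for uniform limits, but carried out carefully with the two-norm and with a fixed auxiliary vector $z$ playing the role that a single variable plays in the real case. So let $(x_n)$ be an arbitrary quasi-Cauchy sequence of points in $E$, fix $\varepsilon>0$ and $w\in X$; I must show $\|\Delta f(x_n),w\|<\varepsilon$ for all large $n$. First I would use uniform convergence of $(f_n)$ to $f$ to pick $N$ with $\|f_N(t)-f(t),w\|<\varepsilon/3$ for every $t$ (the relevant points being $x_{n+1}$ and $x_n$). Then the triangle inequality (axiom (4) of Definition \ref{Definitionofatwonormedspace}) gives
\begin{align*}
\|\Delta f(x_n),w\| &\le \|f(x_{n+1})-f_N(x_{n+1}),w\| + \|f_N(x_{n+1})-f_N(x_n),w\| \\
&\quad + \|f_N(x_n)-f(x_n),w\|,
\end{align*}
so the first and third terms are each $<\varepsilon/3$, and it remains to control the middle term $\|\Delta f_N(x_n),w\|$.

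For the middle term I would invoke ward continuity of $f_N$: since $(x_n)$ is quasi-Cauchy, the image sequence $(f_N(x_n))$ is quasi-Cauchy, i.e. $\lim_{n\to\infty}\|\Delta f_N(x_n),v\|=0$ for every $v\in X$. Applying this with $v=w$ gives a positive integer $N_1$ with $\|\Delta f_N(x_n),w\|<\varepsilon/3$ for all $n\ge N_1$. Combining, $\|\Delta f(x_n),w\|<\varepsilon$ for all $n\ge N_1$. Since $\varepsilon>0$ and $w\in X$ were arbitrary, $(f(x_n))$ is quasi-Cauchy, hence $f$ is ward continuous on $E$.

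The step I expect to be the only subtle point is the order of quantifiers: in the two-norm setting "quasi-Cauchy" means the limit condition holds for \emph{every} $z\in X$, and ward continuity of $f_N$ delivers exactly that for $(f_N(x_n))$, so it is legitimate to specialize to the single fixed $w$ that was handed to us at the start. Equally, uniform convergence as defined in the Preliminaries gives the bound $\|f_n(x)-f(x),z\|<\varepsilon$ for all $x,z$, so in particular for our $w$; no uniformity over a varying auxiliary vector is actually needed here beyond what the definition already supplies. Everything else is a routine three-term triangle inequality, so there is no real obstacle — unlike the uniformly-continuous case (Theorem \ref{Theoremuniformlycontinuousfunctioniswardcontinuous}), we never have to juggle finitely many $\delta_k$'s and $z_k$'s, because ward continuity of each $f_n$ is itself already phrased directly in terms of quasi-Cauchy sequences.
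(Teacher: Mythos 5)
Your proposal is correct and follows essentially the same $\varepsilon/3$ argument as the paper: uniform convergence controls the two outer terms, ward continuity of $f_N$ controls the middle term, and the triangle inequality combines them. Your explicit attention to fixing $w$ first (so that the index $N_1$ may depend on $w$, which is all that quasi-Cauchyness requires) is if anything slightly more careful about quantifiers than the paper's own write-up.
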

\begin{proof} To prove that $f$ is ward continuous on $E$, take any quasi-Cauchy sequence $(x_{n})$ of points in $E$. Let $\varepsilon$ be any positive real number. Since $(f_n)$ is uniformly convergent to $f$, for  $\frac{\epsilon}{3}>0$ there exists a positive integer N such that $||f_n(x)-f(x),z||<\frac{\epsilon}{3}$ whenever $n\geq N$ for all $x,z\in E$. Since $f_N$ is ward continuous on $E$, there exists a positive integer $N_1\geq N$ such that $||f_N(x_{n+1})-f_N(x_{n}),z||<\frac{\epsilon}{3}$ for $n\geq N_1$. Then we have for $n\geq N_1$
\begin{align*}
&||f(x_{n+1})-f(x_{n}),z||\\
&\leq||f(x_{n+1})-f_N(x_{n+1}),z||+||f_N(x_{n+1})-f_N(x_{n}),z||+||f_N(x_{n})-f(x_{n}),z||\\
&\leq \frac{\epsilon}{3}+\frac{\epsilon}{3}+\frac{\epsilon}{3}=\epsilon
\end{align*}
for every $z \in {X}$. So $f$ is ward continuous on $E$, and the proof is completed.
\end{proof}

\section{Conclusion}

The concept of $2$-normed spaces was extensively studied by G\"ahler who asked what the real motivation for studying $2$-norm structure is, and if there is a physical situation or an abstract concept where norm topology does not work but $2$-norm topology does work. We observe that if a term in the definition of  $2$-norm represents the change of a shape, and the  $2$-norm stands for the associated area, we can think of some plausible application of the notion of  $2$-norm, and then the generalized convergence make sense. This can also be viewed as: suppose for a particular output we need  two-inputs but with one  main input and other input is required to complete the process. So that one may expect to be a more useful tool in the field of $2$-normed space in modelling various problems, occuring in many areas of science. computer science and information theory. Such possible applications attract researchers to be involved in investigation on $2$-normed spaces. We note that the present work contains not only an investigation of quasi-Cauchy sequences as it has been presented in a very different setting, i.e. in a two normed space which is quite different from the real case, and metric case, but also an investigation of some other kinds of sequential continuities.

We note that the study in this paper can be carried to $n$-normed spaces without any difficulty (see for example \cite{ReddyandDuttaOnEquivalenceofnNorms} for the definition of an $n$-normed space). For further study, we suggest to investigate quasi-Cauchy sequences of points and fuzzy functions in a $2$-normed fuzzy spaces. However due to the change in
settings, the definitions and methods of proofs will not always be analogous to those of the present work (for example see \cite{CakalliandPratulFuzzycompactnessviasummability}, \cite{SomasundaramandBeaulajSomeaspectsof$2$-fuzzy$2$-normedlinearspaces}, and \cite{KocinacSelectionpropertiesinfuzzymetricspaces}).
For another further study we suggest to investigate quasi-Cauchy sequences of double sequences in two normed spaces (see for example \cite{DasandSavasandBhuniaSantanuTwovaluedmeasureandsomenewdoublesequencespacesin2-normedspaces}, and \cite{MohiuddineSomenewresultsonapproximationinfuzzy2-normedspaces} for the definitions and related concepts in the double case).

\end{document}